\theoremstyle{definition}
\newtheorem{Thm}{Theorem}[section]
\newtheorem{Def}[Thm]{Definition}
\newtheorem{Cor}[Thm]{Corollary}
\newtheorem{Pro}[Thm]{Proposition}
\newtheorem{Rem}[Thm]{Remark}
\newtheorem{Ex}[Thm]{Example}
\newtheorem{Lem}[Thm]{Lemma}
\newcommand{\Q}{\mathbb{Q}}
\newcommand{\Z}{\mathbb{Z}}
\newcommand{\F}{\mathbb{F}}
\newcommand{\row}{\mathrm{Row}}
\newcommand{\RZ}{\mathbb{R}\mathcal{Z}}
\newcommand{\uxa}{\ensuremath{(\underline{X},\underline{A})}}
\newcommand{\wuxa}{\ensuremath{\widehat{(\underline{X},\underline{A})}}}
\def\err#1{{\textcolor{red}{}}}
\def\red#1{{\textcolor{black}{#1}}}
\newcommand{\e}{\mathbf{e}}
\newcommand{\atopp}[2]{\genfrac{}{}{0pt}{}{#1}{#2}}
\newcommand{\qqed}{\hfill\Box}
\title{Homotopy decomposition of a suspended real toric space}
\date{\today}
\author[S.Choi]{Suyoung Choi}
\thanks{The first named author was supported by Basic Science Research Program through the National Research Foundation of Korea(NRF) funded by the Ministry of Education(NRF-2011-0024975).}
\address{Department of Mathematics, Ajou University, San 5, Woncheondong, Yeongtonggu, Suwon 443-749, Korea}
\email{schoi@ajou.ac.kr}
\author[S.Kaji]{Shizuo KAJI}
\thanks{The second named author was partially supported by KAKENHI, Grant-in-Aid for Young
     Scientists (B) 26800043 and JSPS Postdoctoral Fellowships for Research Abroad.}
\address{Department of Mathematical Sciences,
Faculty of Science,
Yamaguchi University,
1677-1, Yoshida, Yamaguchi 753-8512, Japan}
\email{skaji@yamaguchi-u.ac.jp}
\author[S.Theriault]{Stephen Theriault}
\address{School of Mathematics, University of Southampton,
    Southampton SO17 1BJ, United Kingdom}
\email{S.D.Theriault@soton.ac.uk}
\subjclass[2010]{
Primary 55P15; Secondary 57S17.}
\keywords{homotopy decomposition, real toric manifold, real toric spaces, small cover}
\dedicatory{In memory of Professor Samuel Gitler}
\begin{document}

\begin{abstract}
We give $p$-local homotopy decompositions of the suspensions of real toric spaces for odd primes $p$.
Our decomposition is compatible with the one given by Bahri, Bendersky, Cohen, and Gitler for the suspension of the corresponding real moment-angle complex,  or more generally, the polyhedral product.
As an application, we obtain a stable rigidity property  for real toric spaces.
This is a generalized version of the published paper \cite{SSS}.
\end{abstract}

\maketitle

\section{Introduction}

For a simplicial complex $K$ on $m$-vertices $[m]=\{1, \ldots, m\}$ the
\emph{real moment-angle complex} $\RZ_K$ (or \emph{the polyhedral product}
$(\underline{D^1},\underline{S^0})^K$) of $K$ is defined as follows:
\begin{align*}
    \RZ_K &= (\underline{D^1},\underline{S^0})^K \\
          &:= \bigcup_{\sigma\in K} \left\{(x_1,\dotsc,x_m)\in (D^1)^m \mid x_i \in S^0 \text{ when }i\notin \sigma\right\},
\end{align*}
    where $D^1=[0,1]$ is the unit interval and $S^0 = \{0,1\}$ is its boundary.
    It should be noted that
    $\RZ_K$ is a topological manifold if $K$ is a simplicial sphere \cite[Lemma~6.13]{BP}.

There is a canonical $\F_2^m$-action on $\RZ_K$ which comes from
     the $\F_2$-action on the pair $(D^1,S^0)$.
Any subgroup of $\F_2^m$ is specified as the kernel of a linear map
$\lambda\colon \F_2^m \to \F_2^n$ for some $n\le m$.
The quotient space $\RZ_K/\ker\lambda$ is denoted by $M(K,\lambda)$
 and called the {\em real toric space} associated to $K$ and $\lambda$.
 We denote the $i$-th column of $\lambda$ by $\lambda(i)$.

Real toric spaces are a generalization to well-studied classes of spaces.
When $\lambda$ satisfies the following condition, it is called a (mod $2$) \emph{characteristic function}
of $K$:
\begin{equation}\label{characteristic}
     \text{$\lambda(i_1),\dotsc,\lambda(i_\ell)$ are linearly independent in $\F_2^n$ if $\{i_1, \ldots, i_\ell\} \in K$. }
\end{equation}
\begin{Lem}\label{lem:free-act}
The action of
$\ker \lambda$ on $\RZ_K$ is free if and only if
\eqref{characteristic} is satisfied.
\end{Lem}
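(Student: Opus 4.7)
The plan is to reduce the question of freeness to a linear-algebra condition on the columns of $\lambda$ by computing isotropy subgroups for the $\F_2^m$-action on $\RZ_K$ point by point.

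The nontrivial $\F_2$-action on the pair $(D^1,S^0)$ is $x \mapsto 1-x$, whose unique fixed point on $D^1$ is $1/2$. Hence, for $x=(x_1,\dotsc,x_m)\in (D^1)^m$, its isotropy subgroup under the diagonal $\F_2^m$-action is exactly
\[
\mathrm{Stab}_{\F_2^m}(x) = \F_2^{F(x)}, \qquad F(x) := \{i \in [m] \mid x_i = 1/2\},
\]
where $\F_2^{F(x)} \subseteq \F_2^m$ denotes the coordinate subspace indexed by $F(x)$. The first step is to identify which subsets of $[m]$ actually arise as $F(x)$ for points of $\RZ_K$.

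The key observation is that $F(x)$ is always a simplex of $K$ when $x\in\RZ_K$, and conversely every simplex arises this way. Indeed, since $x \in \RZ_K$, the set $\{i \mid x_i \notin S^0\}$ is contained in some $\sigma \in K$; as $F(x) \subseteq \{i \mid x_i \notin S^0\}$, and $K$ is closed under taking faces, we get $F(x)\in K$. Conversely, for any $\tau = \{i_1,\ldots,i_\ell\}\in K$, the point defined by $x_i=1/2$ for $i\in\tau$ and $x_i=0$ for $i\notin\tau$ lies in $\RZ_K$ and has $F(x) = \tau$. Consequently,
\[
\{\,\mathrm{Stab}_{\F_2^m}(x) \mid x\in\RZ_K\,\} \;=\; \{\,\F_2^\tau \mid \tau\in K\,\}.
\]

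The action of $\ker\lambda$ on $\RZ_K$ is therefore free if and only if $\ker\lambda \cap \F_2^\tau = 0$ for every $\tau\in K$. Viewing $\lambda$ as a linear map $\F_2^m\to\F_2^n$ with matrix columns $\lambda(1),\dotsc,\lambda(m)$, the intersection $\ker\lambda \cap \F_2^\tau$ is trivial precisely when $\lambda$ restricted to the coordinate subspace $\F_2^\tau$ is injective, which in turn is equivalent to the columns $\{\lambda(i)\}_{i\in\tau}$ being linearly independent in $\F_2^n$. Quantifying over all $\tau=\{i_1,\dotsc,i_\ell\}\in K$ gives exactly condition \eqref{characteristic}, completing the equivalence.

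There is no serious obstacle: the only subtlety is the correct identification of the pointwise fixed-point set of the $\F_2$-action on $D^1$, which pins down the stabilizers, and then translating ``$\ker\lambda$ meets each $\F_2^\tau$ trivially'' into the column-independence statement.
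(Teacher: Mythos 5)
Your proof is correct and follows essentially the same route as the paper's: both identify the isotropy of a point with the coordinate subgroup indexed by the coordinates equal to $1/2$ (using that $\F_2$ acts freely on $S^0$ and fixes only $1/2$ in $D^1$), observe that the relevant index sets are exactly the simplices of $K$, and translate triviality of $\ker\lambda\cap\F_2^\tau$ into linear independence of the columns $\lambda(i)$, $i\in\tau$. Your packaging via stabilizer subgroups is a slightly more systematic phrasing of the same computation, including the same explicit fixed point in the converse direction.
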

\begin{proof}
Let $x=(x_1,x_2,\ldots,x_m)\in \RZ_K = (\underline{D^1},\underline{S^0})^K$
be the fixed point of an element $g=(g_1,g_2,\ldots,g_m)^t\in \ker \lambda \subset \F_2^m$.
This means either $g_i=0$ or $x_i \in (D^1)^{\F_2} = \{1/2\}$ for all $i\in [m]$.
Let $\sigma\in K$ be a simplex such that $x\in (\underline{D^1},\underline{S^0})^\sigma$
and $\lambda_\sigma$ be the sub-matrix of $\lambda$
consisting of columns corresponding to $\sigma$.
Let $g_\sigma$ be the sub-vector of $g$ corresponding to $\sigma$.
Since $g\in \ker \lambda$, we have
\[
\lambda g =\lambda_\sigma g_\sigma + \lambda_{[m]\setminus \sigma} g_{[m]\setminus \sigma} = 0.
\]
Since $\F_2$ acts on $S^0$ freely, we have $g_i=0$ for $i\notin \sigma$.
Then, by the previous equation we have $\lambda_\sigma g_\sigma=0$.
Therefore, the condition \eqref{characteristic} implies $g_\sigma=0$ and $g=0$.
On the other hand, if \eqref{characteristic} does not hold for a simplex $\sigma\in K$,
the point $x_i=\begin{cases} 1/2 & (i\in \sigma) \\ 0 & (i\not\in \sigma) \end{cases}$
is fixed by an element in $\ker\lambda$.
\end{proof}
For a characteristic function $\lambda$, the corresponding real toric space
$M(K,\lambda)$ is known as a \emph{small cover}~\cite{DJ91} if $K$ is a polytopal $(n-1)$-sphere,
and as a \emph{real topological toric manifold}~\cite{IFM12} if $K$ is a star-shaped $(n-1)$-sphere.
\medskip

In \cite[Theorem 2.21]{BBCG} it is shown that there is a homotopy equivalence
\begin{equation}\label{decom_RZ}
\Sigma \RZ_K \simeq \Sigma \bigvee_{I\in [m]} \Sigma\vert K_I\vert
\simeq \Sigma \bigvee_{I\not\in K} \Sigma\vert K_I\vert,
\end{equation}
where $K_{I}$ is the full subcomplex of $K$ on the vertex set $I$ and
$\vert K_{I}\vert$ is its geometric realization so that $|K_I|$ is contractible when $I\in K$.
In this short note, we give
an analogous odd primary decomposition of the suspension of $M(K,\lambda)$.
\begin{Thm}\label{thm:main}
Let $M(K,{\lambda})$ be a real toric space. Localized at an odd prime $p$ or the
rationals (denoted by $p=0$) there is a homotopy equivalence
$$
\Sigma (M(K,\lambda)) \simeq_p \Sigma\!\!\!\!
    \bigvee_{I\in \row(\lambda)}\!\!\!\!\Sigma\vert K_I\vert,
$$
where $\row(\lambda)$ is the space of $m$-dimensional $\F_2$-vectors spanned by the rows of
$\lambda$ associated to~$\lambda$.
We identify vectors in $\F_2^m$ with subsets of $[m]$ in the obvious manner.
\end{Thm}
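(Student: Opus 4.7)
The plan is to realize $\Sigma M(K,\lambda)$ as a stable wedge summand of $\Sigma \RZ_K$ at odd primes, and then to read off which pieces of the splitting \eqref{decom_RZ} survive after averaging over $\ker\lambda$. Because $M(K,\lambda) = \RZ_K/\ker\lambda$ is the orbit space of a finite $2$-group action, the transfer for finite group actions furnishes a stable map $\tau\colon \Sigma^\infty M(K,\lambda)_+ \to \Sigma^\infty (\RZ_K)_+$ whose composite with the projection $\pi_*$ equals $|\ker\lambda|\cdot \mathrm{id}$. At an odd prime $p$ (or rationally) $|\ker\lambda|$ is invertible, so $\pi$ admits a $p$-local stable section, and $\tau\circ\pi_*$ becomes the averaging idempotent $\frac{1}{|\ker\lambda|}\sum_{g\in \ker\lambda} g_*$ on $\Sigma^\infty_{(p)}(\RZ_K)_+$.

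Next, I would upgrade \eqref{decom_RZ} to an $\F_2^m$-equivariant splitting in which $g=(g_1,\ldots,g_m)$ acts on the wedge summand $\Sigma\vert K_I\vert$ by the scalar $(-1)^{\sum_{i\in I}g_i}$. The sign arises because the $i$-th coordinate reflection of $\RZ_K$ restricts to a degree-$(-1)$ self-map of the cofiber $D^1/S^0\simeq S^1$, which contributes to the $I$-th summand precisely when $i\in I$; on the $\vert K_I\vert$ factor there is no further $\F_2^m$-action to track. The technical heart is re-examining the construction of \eqref{decom_RZ}, which proceeds by an iterated cofibration and James--Milnor splitting of suspended products, and verifying that each step can be chosen compatibly with the $\F_2^m$-action, so that a single equivariant wedge decomposition exists with the prescribed signs.

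With these two ingredients in place, the averaging idempotent acts on the $I$-th wedge summand as the identity whenever $\sum_{i\in I}g_i=0$ for every $g\in \ker\lambda$, and as zero otherwise. The surviving condition says precisely that the indicator vector $\mathbf{1}_I\in \F_2^m$ annihilates $\ker\lambda$, which by elementary $\F_2$-duality is the same as $\mathbf{1}_I\in (\ker\lambda)^\perp=\row(\lambda)$. Taking the wedge over the surviving $I$ and feeding this back through the transfer splitting yields the claimed $p$-local equivalence $\Sigma M(K,\lambda)\simeq_p \Sigma\bigvee_{I\in \row(\lambda)}\Sigma\vert K_I\vert$.

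The main obstacle I anticipate is the equivariant refinement of \eqref{decom_RZ}; producing a single $\F_2^m$-equivariant choice of the wedge decomposition (as opposed to a stable splitting whose pieces are merely individually invariant) is the genuinely topological content. Once it is in hand, the sign computation is a one-line degree argument on $S^1$, the identification $(\ker\lambda)^\perp=\row(\lambda)$ is linear algebra, and the transfer step is standard homotopy theory.
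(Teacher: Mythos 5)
Your strategy (transfer splitting plus identifying the invariant summands of the Bahri--Bendersky--Cohen--Gitler decomposition) is genuinely different from the paper's, and it does land on the correct combinatorial answer: the surviving summands are indexed by $(\ker\lambda)^{\perp}=\row(\lambda)$. However, the step you yourself flag as the ``technical heart'' is not just unproven but, as stated, false at the space level. The coordinate reflection $x_i\mapsto 1-x_i$ interchanges the two points of $S^0$, so it does not preserve the basepoint used to form the smash polyhedral product $\widehat{\RZ}_{K_I}$; the reflections therefore do not act on the individual wedge summands at all, and one cannot ask for an $\F_2^m$-equivariant wedge decomposition in which $g$ acts by the scalar $(-1)^{\sum_{i\in I}g_i}$ on the $I$-th piece. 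What is actually true is weaker: the $\F_2^m$-action on $H_*(\RZ_K)$ preserves the filtration by index sets and is \emph{upper triangular} with respect to the BBCG decomposition, with the signs you describe appearing only on the associated graded. This is precisely the difficulty the paper confronts: it works in Cai's cochain-level model $R_K$, where the Reynolds operator $N$ sends $u_\sigma t_{I\setminus\sigma}$ to itself \emph{plus lower-order terms} (Lemma~\ref{lem:CP}), and the triangularity argument of Lemma~\ref{Philemma} is what shows the invariants still project isomorphically onto $\bigoplus_{I\in\row(\lambda)}R_{K_I}$. At an odd prime your conclusion about the invariant submodule can be rescued by Maschke's theorem (a filtered $\Z_{(p)}[\F_2^m]$-module with semisimple associated graded splits), but that argument needs to be made, and it replaces rather than justifies the claimed equivariant splitting of spaces.

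Two further gaps. First, the covering-space transfer you invoke requires the $\ker\lambda$-action on $\RZ_K$ to be free, which by Lemma~\ref{lem:free-act} holds only when $\lambda$ is a characteristic function; the theorem is stated for arbitrary $\lambda$, so you would need Dold's transfer for ramified coverings, or (as the paper does) only the cohomological transfer of \cite[\S III.2]{Bredon}, which is valid for any finite group action. Second, the transfer is a stable wrong-way map, so your argument as written produces only an equivalence of suspension spectra, whereas the theorem asserts an equivalence after a single suspension. The paper avoids this by constructing an honest map of spaces $\phi=\Sigma q\circ\bar H^{-1}\circ\Sigma\, incl$ from the sub-wedge into $\Sigma M(K,\lambda)$ using the genuine quotient map $q$, and then checking that $\phi^*$ is an isomorphism on $\Z_{(p)}$-cohomology between simply connected spaces. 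To complete your proof you would need to exhibit such an unstable map; once you do, you are essentially forced into the paper's argument for showing it is an equivalence.
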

\begin{Rem}
When $\lambda$ is the identity matrix, $M(K,\lambda)$ is nothing but $\RZ_K$.
On the other hand, when $\lambda$ is zero, $M(K,\lambda)$ is the cone over $|K|$
and contractible.
\end{Rem}

The restriction to odd primes arises because 
when $\vert\ker\lambda\vert$ is inverted in
a coefficient ring $R$ then the quotient map
\(\RZ_K\longrightarrow M(K,{\lambda})\)
induces an injection in cohomology with image the invariant subring
$H^{\ast}(\RZ_{K};R)^{\ker\lambda}$ (see, for example, \cite[\S III.2]{Bredon}).
 This will be used to help
analyze the topology of $M(K,\lambda)$. As $\ker\lambda$
has order a power of $2$ we can take $R$ to be $\mathbb{Z}_{(p)}$ or $\mathbb{Q}$.
In fact, Theorem~\ref{thm:main} fails when $p=2$ in simple cases. For
example, if $K$ is the boundary of a triangle and
$\lambda=\left(\begin{array}{ccc} 1 & 0 & 1 \\ 0 & 1 & 1 \end{array}\right)$,
then $M(K,{\lambda})=\mathbb{R}P^{2}$ but each $\Sigma \vert K_{I}\vert$ is contractible.

Recent work of Yu~\cite{Yu} gave a different decomposition of the suspension
of certain quotient spaces of $\RZ_K$.
He considers $\lambda$
which is associated to a partition on the vertices of $K$,
and proves that $M(K,\lambda)$ decomposes analogously to the Bahri, Bendersky,
Cohen and Gitler decomposition. Yu's decomposition has the advantage of
working integrally, but it has the disadvantage
of working only for particular $\lambda$. Our decomposition, by
contrast, works only after localizing at an odd prime but holds for any $\lambda$.

\subsection*{Acknowledgement}
The authors would like to thank Daisuke Kishimoto for pointing out 
an error in an earlier version of the paper,
and Mikiya Masuda for pointing out that our argument works for not only
characteristic functions but also any linear maps $\lambda$.

\section{Polyhedral product and its stable decomposition}
Let us first recall Bahri, Bendersky, Cohen and Gitler's argument in \cite{BBCG}.
To make it more clear, we present it in its full polyhedral product form.
Let $K$ be a simplicial complex on the vertex set $[m]$ and for $1\leq i\leq m$
let $(X_{i},A_{i})$ be pairs of pointed $CW$-complexes. If $\sigma$ is a face
of $K$ let
\[\uxa^{\sigma}=\prod_{i=1}^{m} Y_{i}\qquad\mbox{where}\qquad
     Y_{i}=\left\{\begin{array}{ll} X_{i} & \mbox{if $i\in K$} \\
              A_{i} & \mbox{if $i\notin K$.}\end{array}\right.\]
The \emph{polyhedral product} is
\[\uxa^{K}=\bigcup_{\sigma\in K}\uxa^{\sigma}.\]
Notice that $\uxa^{K}$ is a subspace of the product $\prod_{i=1}^{m} X_{i}$.
There is a canonical quotient map from the product to the smash product,
\(\prod_{i=1}^{m} X_{i}\longrightarrow\bigwedge_{i=1}^{m} X_{i}\).
The \emph{smash polyhedral product} $\wuxa^{K}$ is the image of
the composite
\(\uxa^{K}\longrightarrow\prod_{i=1}^{m} X_{i}\longrightarrow
   \bigwedge_{i=1}^{m} X_{i}\).
In particular, mapping onto the image gives a map
\(\uxa^{K}\longrightarrow\wuxa^{K}\).

Let $I\subset [m]$. As in~\cite[2.2.3(i)]{DS}, projecting $\prod_{i=1}^{m} X_{i}$ onto $\prod_{i\in I} X_{i}$
induces a map of polyhedral products
\(\uxa^{K}\longrightarrow\uxa^{K_{I}}\).
We then obtain a composition into a smash polyhedral product:
\[p_I: \uxa^{K}\longrightarrow\uxa^{K_{I}}\longrightarrow\widehat{\uxa}^{K_{I}}.\]
Suspending, we can add every such composition over all full subcomplexes of $K$,
giving a composition
\[\overline{H}\colon\Sigma\uxa^{K}\stackrel{comul}{\longrightarrow}\bigvee_{I\subset [m]}\Sigma\uxa^{K_{I}}
     \xrightarrow{\vee \Sigma p_I}\bigvee_{I\subset [m]}\Sigma\widehat{\uxa}^{K_{I}}.\]
Bahri, Bendersky, Cohen and Gitler~\cite[Theorem 2.10]{BBCG} show
that $\overline{H}$ is a homotopy equivalence.

Further, in the special case when
each $X_{i}$ is contractible, they show that there is a homotopy equivalence
$\wuxa^{K_{I}}\simeq\Sigma(\vert K_{I}\vert\wedge\widehat{A}^{I})$
\cite[Theorem 2.19]{BBCG}, where $\widehat{A}^{I}=\bigwedge_{j=1}^{k} A_{i_{j}}$
for $I=(i_{1},\ldots, i_{k})$.
Consequently, when each $X_{i}$ is contractible the map $\overline{H}$
specializes to a homotopy equivalence
\[H\colon\Sigma\uxa^{K}\longrightarrow\bigvee_{I\subset [m]}\Sigma\uxa^{K_{I}}
     \longrightarrow\bigvee_{I\subset [m]}\Sigma\widehat{\uxa}^{K_{I}}
     \stackrel{\simeq}{\longrightarrow}
     \bigvee_{I\subset [m]}\Sigma^{2} (\vert K_{I}\vert\wedge\widehat{A}^{I}).\]

In our case, each pair $(X_{i},A_{i})$ equals $(D^{1},S^{0})$ and $D^{1}$ is
contractible. As there is a homotopy equivalence
$S^{0}\wedge S^{0}\simeq S^{0}$, each $\widehat{A}^{I}$ is homotopy
equivalent to $S^{0}$. Therefore there are homotopy equivalences
\begin{equation}\label{eq:factor}
    \widehat{\RZ}_{K_{I}}:=
    \widehat{(\underline{D^1},\underline{S^0})}^{K_I} \stackrel{\simeq}{\longrightarrow}
\Sigma\vert K_{I}\vert\wedge\widehat{A}^{I}\simeq
    \Sigma\vert K_{I}\vert\wedge S^{0}
    \err{(Removed)\simeq\vert K_{I}\vert\wedge\Sigma S^{0}\simeq \vert K_{I}\vert\wedge S^{1}}
    \simeq\Sigma\vert K_{I}\vert.
    \end{equation}
Thus the map $H$ becomes a homotopy equivalence
\[H\colon\Sigma\RZ_{K}\longrightarrow\bigvee_{I\subset [m]}\Sigma\RZ_{K_{I}}
      \longrightarrow\bigvee_{I\subset [m]}\Sigma\widehat{\RZ}_{K_{I}}
      \stackrel{\simeq}{\longrightarrow}
      \bigvee_{I\subset [m]}\Sigma^{2}\vert K_{I}\vert.\]
It is in this form that we will use the Bahri, Bendersky, Cohen and Gitler
decomposition because, as we will see shortly, it corresponds to a module
decomposition of a differential graded algebra~$R_{K}$ whose cohomology
equals $H^{\ast}(\RZ_K)$. 

\section{Proof of the Main theorem}
Consider the following diagram
\begin{equation}
   \label{qphidgrm}
\xymatrix{
\Sigma \RZ_K \ar[r]^-{\bar{H}} \ar[d]^{\Sigma q} & \Sigma \bigvee_{I\subset [m]} \widehat{\RZ}_{K_{I}}
\ar[r]^{\simeq} &\Sigma\bigvee_{I\subset [m]}\Sigma\vert K_{I}\vert \\
\Sigma M(K,\lambda) & \Sigma \bigvee_{I\in \row(\lambda)} \widehat{\RZ}_{K_{I}}\ar[u]^{\Sigma incl} \ar[l]_-\phi
\ar[r]^{\simeq} &\Sigma\bigvee_{I\in \row(\lambda)}\Sigma\vert K_{I}\vert
}
\end{equation}
where, by definition, $\phi = \Sigma q \circ \bar{H}^{-1} \circ \Sigma incl$.

To prove Theorem~\ref{thm:main} we will show that
$\phi^\ast$ induces an isomorphism on cohomology.
From now on, assume that coefficients in cohomology are $\Q$ or $\Z_{(p)}$,
where $p$ is an odd prime.

First, \red{by~\cite[Theorem 5.1]{Cai}} the cohomology ring of $\RZ_K$ is given as follows.
Let $\Z_{(p)}\langle u_1,\ldots,u_m,t_1,\ldots,t_m \rangle$ be the free associative
algebra over the indeterminants of $\deg u_i=1, \deg t_i=0\ (i=1, \ldots, m)$.
Define a differential graded algebra $R_K$ as the quotient
\[
R_K=\dfrac{\Z_{(p)}\langle u_1,\ldots,u_m,t_1,\ldots,t_m \rangle}
{ (u_i^2,u_iu_j+u_ju_i, u_it_i-u_i,t_iu_i,t_iu_j-u_jt_i,t_i^2-t_i,t_it_j-t_jt_i, u_\sigma \ (\sigma\not\in K))}
\]
where $i\neq j$ and $d(t_i)=u_i$ for each $i=1, \ldots, m$.
Then $H^\ast(\RZ_K)\simeq H^\ast(R_K)$.
We shall use the notation $u_\sigma$ (respectively, $t_\sigma$) for the monomial $u_{i_1} \cdots u_{i_k}$ (respectively, $t_{i_1} \cdots t_{i_k}$) where $\sigma = \{i_1, \ldots, i_k\}$, $i_1 < \cdots < i_k$, is a subset of $[m]$.
For $I\subset [m]$, denote by $R_{K_I}$ the differential graded sub-module of
 $R_K$ spanned by
the monomials
$\{ u_\sigma t_{I\setminus \sigma} \mid \sigma\in K_I\}$. Observe from
the definitions of $R_K$ and $R_{K_{I}}$ that
there is an additive isomorphism $R_K=\bigoplus_{I\subset [m]}R_{K_I}$.

\begin{Lem}
There is an additive isomorphism
\[
H^*(R_{K_I})\simeq \tilde{H}^*(\widehat{\RZ}_{K_{I}})
\]
and the projection
$p_I \colon \RZ_K \to \widehat{\RZ}_{K_{I}}$
induces the inclusion $p_I^\ast: H^*(R_{K_I}) \hookrightarrow H^*(R_K)$.
\end{Lem}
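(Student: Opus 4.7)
The plan is to recognise $R_{K_I}$ as the augmented reduced simplicial cochain complex of $K_I$ shifted up in degree by one, compute its cohomology, and match the result with $\widehat{\RZ}_{K_I}\simeq \Sigma|K_I|$ from~\eqref{eq:factor}. For the compatibility with $p_I^*$, I would compare the BBCG topological splitting of $H^*(\RZ_K)$ with the algebraic splitting $R_K=\bigoplus_I R_{K_I}$ via the cellular interpretation of Cai's generators.

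First, I would verify that $R_{K_I}$ is closed under $d$. A monomial $u_\sigma t_{I\setminus\sigma}$ with $\sigma\in K_I$ sits in degree $|\sigma|$, and the Leibniz rule together with $d(u_i)=0$, $d(t_i)=u_i$, and the commutations $t_iu_j=u_jt_i$ for $i\neq j$ yield
\[
d(u_\sigma t_{I\setminus\sigma})=\sum_{j\in I\setminus\sigma}\epsilon_j\, u_{\sigma\cup\{j\}}t_{I\setminus(\sigma\cup\{j\})},
\]
with signs $\epsilon_j\in\{\pm1\}$ determined by the position of $j$ within $\sigma\cup\{j\}$. Terms with $\sigma\cup\{j\}\notin K_I$ drop out because $u_\tau=0$ for $\tau\notin K$, so $d$ preserves $R_{K_I}$. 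Reading off these signs, $R_{K_I}$ is isomorphic as a cochain complex to the augmented simplicial cochain complex of $K_I$ shifted up by one --- the empty face sits in degree $0$ and a $(k-1)$-simplex in degree $k$ --- so
\[
H^k(R_{K_I})\cong \tilde H^{k-1}(|K_I|)\cong \tilde H^k(\Sigma|K_I|)\cong \tilde H^k(\widehat{\RZ}_{K_I}),
\]
which is the additive isomorphism claimed in the lemma.

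For the second assertion, I would appeal to the cellular interpretation of Cai's DGA. The cells of $\RZ_K$ are indexed by pairs $(\sigma,\tau)$ with $\sigma\in K$ and $\tau\subseteq[m]\setminus\sigma$, and the cochain dual to $(\sigma,\tau)$ corresponds to the monomial $u_\sigma t_\tau\in R_K$; the summand $R_{K_I}$ is spanned precisely by those monomials with $\sigma\cup\tau=I$. The map $p_I$ factors as the coordinate projection $\RZ_K\to\RZ_{K_I}$ followed by the smash quotient $\RZ_{K_I}\to\widehat{\RZ}_{K_I}$, and the non-basepoint cells of $\widehat{\RZ}_{K_I}$ are indexed by pairs $(\sigma,I\setminus\sigma)$ with $\sigma\in K_I$. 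Hence cellular cochains on $\widehat{\RZ}_{K_I}$ pull back under $p_I^*$ to the monomials $u_\sigma t_{I\setminus\sigma}$, i.e.\ to $R_{K_I}$. This identifies $p_I^*$ with the sub-complex inclusion $R_{K_I}\hookrightarrow R_K$ at the cochain level, and so also on cohomology.

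The main technical obstacle is making the cellular bookkeeping precise --- verifying that Cai's generators correspond exactly to duals of cells under the stated labelling, and that the coordinate projection and smash quotient respect this labelling with the right signs. Once this identification is in place, both halves of the lemma follow directly.
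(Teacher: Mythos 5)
Your proposal is correct and takes essentially the same approach as the paper: the first assertion via the identification $u_\sigma t_{I\setminus\sigma}\mapsto\sigma^*$ of $R_{K_I}$ with the (augmented, degree-shifted) simplicial cochain complex of $K_I$ combined with $\widehat{\RZ}_{K_I}\simeq\Sigma|K_I|$ from \eqref{eq:factor}, and the second via the cellular-cochain description of $p_I^*$ whose image is spanned by the monomials $u_\sigma t_{I\setminus\sigma}$. The only difference is that the cellular bookkeeping you flag as the remaining technical work is exactly what the paper outsources to Cai (\cite[Proposition 3.3 and \S 3.2]{Cai}).
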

\begin{proof}
The first assertion follows from
$\widehat{\RZ}_{K_{I}}\simeq \Sigma |K_I|$ (see \eqref{eq:factor})
and the isomorphism $H^*(R_{K_I})\simeq \tilde{H}^{*-1}(|K_I|)$ given by
\begin{align*}
R_{K_I} & \to C^*(K_I) \\
u_\sigma t_{I\setminus \sigma}  &\mapsto \sigma^*,
\end{align*}
where $C^*(K_I)$ is the simplicial cochain complex of $K_I$ \red{(\cite[Proposition 3.3]{Cai})}.

To show the second assertion, we look more closely at the
isomorphism $H^\ast(R_K)\simeq H^\ast(\RZ_K)$.
\red{From \cite[\S 3.2]{Cai}},
the monomials $u_\sigma t_{I\setminus \sigma}$ are mapped into
the image of \mbox{$p_I^*: C_e^*(\widehat{\RZ}_{K_I}) \to C_e^*(\RZ_{K})$},
where $C_e^*$ denotes the cellular cochain complex.
By combining this with the first assertion, we deduce the second assertion.
\end{proof}

Now we investigate the maps appearing in (\ref{qphidgrm}).
Since $|\ker \lambda|$ is a unit in the coefficient ring $\Z_{(p)}$,
the map $q^\ast$ is injective with image ${H}^\ast(\RZ_K)^{\ker \lambda}$.
Notice that in cohomology $incl$ induces the projection
$incl^\ast: \bigoplus_{I\subset [m]} H^*(R_{K_I}) \to
\bigoplus_{I\in \row(\lambda)} H^*(R_{K_I})$.
Recall that $\bar{H} = \Sigma \bigvee_{I\subset [m]} p_I\circ comul$
and $\phi=\Sigma q\circ\bar{H}^{-1}\circ\Sigma incl$. So $\phi^{\ast}$
is the composite
\[
\phi^*: H^*(\Sigma M(K,\lambda))\simeq H^*(\Sigma \RZ_K)^{\ker \lambda}
\hookrightarrow H^*(\Sigma \RZ_K)
\xrightarrow{\simeq}\bigoplus_{I\subset [m]} H^*(\Sigma R_{K_I})
\to
\bigoplus_{I\in \row(\lambda)} H^*(\Sigma R_{K_I}),
\]
where $\Sigma$ for graded modules means the degree shift in the positive degree parts.

We aim to show that $\phi^{\ast}$ is an isomorphism. To see this, we need two lemmas.

\begin{Lem}[{\cite[Section~4]{CP}}]\label{lem:CP}
The Reynolds operator
\[
N(x):=\dfrac{1}{|\ker \lambda|}\sum_{g\in \ker \lambda} gx
\]
induces an additive isomorphism
$\bigoplus_{I\in \row(\lambda)} R_{K_I} \xrightarrow{\simeq} R_K^{\ker \lambda}$,
where $R_K^{\ker \lambda}$ is the $\ker \lambda$-invariant ring of $R_K$.
Furthermore, for a monomial $x=u_\sigma t_{I\setminus \sigma}$,
$N(x)$ has the unique maximal term $x$,
where the order is given by the containment of the index set.~$\qqed$
\end{Lem}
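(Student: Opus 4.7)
The plan is to compute the Reynolds operator $N$ explicitly on the additive basis $\{u_\sigma t_{I\setminus\sigma}\}$ of $R_K$ using the induced $\ker\lambda$-action, extract a triangular structure that simultaneously proves the unique-maximal-term statement and injectivity of $N$ on $\bigoplus_{I\in\row(\lambda)}R_{K_I}$, and then invoke a character-theoretic dimension count to upgrade injectivity to an isomorphism onto $R_K^{\ker\lambda}$.

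First I would identify the action. The coordinate-wise reflection $x_i\mapsto 1-x_i$ on $\RZ_K$ induces on $R_K$ a $\ker\lambda$-action sending $u_i\mapsto (-1)^{g_i}u_i$ and $t_i\mapsto t_i+g_i(1-2t_i)$ for each $g=(g_1,\ldots,g_m)$, as worked out in \cite{CP}. Writing $\epsilon_S(g)=(-1)^{\sum_{i\in S}g_i}$ for $S\subseteq [m]$, so that $g\cdot u_\sigma=\epsilon_\sigma(g)\,u_\sigma$, and expanding each factor $(1-2g_j)t_j+g_j$ of the $t$-part, I obtain
\[
 g\cdot(u_\sigma t_{I\setminus\sigma})
 =\epsilon_\sigma(g)\sum_{J\subseteq I\setminus\sigma}\Big(\prod_{j\in J}(1-2g_j)\prod_{k\in(I\setminus\sigma)\setminus J}g_k\Big)\,u_\sigma t_J.
\]
Averaging over $\ker\lambda$ then yields $N(u_\sigma t_{I\setminus\sigma})=\sum_{J\subseteq I\setminus\sigma}c_J\,u_\sigma t_J$, where every term $u_\sigma t_J$ has index set $\sigma\cup J\subseteq I$, with $I$ attained only by $J=I\setminus\sigma$. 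Since $1-2g_j=(-1)^{g_j}$ for $g_j\in\F_2$, the coefficient of the top term reduces to
\[
 c_{I\setminus\sigma}=\frac{1}{|\ker\lambda|}\sum_{g\in\ker\lambda}\epsilon_I(g),
\]
which equals $1$ precisely when $\epsilon_I|_{\ker\lambda}$ is trivial, i.e., when $I\in (\ker\lambda)^\perp=\row(\lambda)$, and $0$ otherwise. This proves the unique-maximal-term assertion.

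From the triangular form, injectivity of $N|_{\bigoplus_{I\in\row(\lambda)}R_{K_I}}$ is immediate: ordering pairs $(\sigma,I)$ by any total refinement of containment on $I$, distinct images $N(u_\sigma t_{I\setminus\sigma})$ have distinct leading monomials. For surjectivity I would diagonalize the action via the substitution $s_j=1-2t_j$, legitimate since $2\in\Z_{(p)}^{\times}$; then $g\cdot s_j=(-1)^{g_j}s_j$, so each product $u_\sigma s_J$ with $\sigma\in K$ and $\sigma\cap J=\emptyset$ is a simultaneous $\ker\lambda$-eigenvector with character $\epsilon_{\sigma\cup J}$. Consequently $R_K^{\ker\lambda}$ is spanned by those $u_\sigma s_J$ with $\sigma\cup J\in\row(\lambda)$; counting these for each fixed $I\in\row(\lambda)$ reproduces the number of faces of $K_I$, hence $\dim R_{K_I}$. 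Source and target of $N$ therefore have equal $\Z_{(p)}$-rank, and injectivity forces the map to be an isomorphism.

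The main obstacle is pinning down the precise $\ker\lambda$-action on $R_K$ and verifying that the substitution $s_j=1-2t_j$ diagonalizes it compatibly with the non-symmetric defining relations $u_it_i=u_i$ and $t_iu_i=0$; this is the technical content of \cite{CP}. Once that is in place, the remaining argument reduces to the elementary $\F_2$-duality $\row(\lambda)=(\ker\lambda)^\perp$ and the orthogonality of characters of $\ker\lambda$.
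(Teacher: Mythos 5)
The paper does not actually prove this lemma: it is stated with a \(\qqed\) and attributed wholesale to \cite[Section~4]{CP}, and the only place its content is exploited is in the induction of Lemma~\ref{Philemma}. So your proposal is not paralleling an argument in the text; it is supplying the missing proof, and as far as I can check it is correct. The computation of the cochain-level action ($u_i\mapsto(-1)^{g_i}u_i$, $t_i\mapsto t_i+g_i(1-2t_i)$, coming from the reflection swapping the two $0$-cells of $D^1$ and reversing the $1$-cell), the expansion of $N(u_\sigma t_{I\setminus\sigma})$ over subsets $J\subseteq I\setminus\sigma$, the identification of the top coefficient with $\frac{1}{|\ker\lambda|}\sum_g(-1)^{\langle I,g\rangle}$, and the duality $\row(\lambda)=(\ker\lambda)^{\perp}$ together give exactly the ``unique maximal term'' statement and the triangularity used later in the paper. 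The eigenbasis $u_\sigma s_J$ with $s_j=1-2t_j$ is the right way to identify $R_K^{\ker\lambda}$ and to match ranks with $\bigoplus_{I\in\row(\lambda)}R_{K_I}$.

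Two points deserve sharper wording. First, as you note yourself, the substitution $u_i\mapsto -u_i$, $t_i\mapsto 1-t_i$ does \emph{not} preserve the defining ideal of $R_K$ (e.g.\ it sends $t_iu_i$ to $-u_i+t_iu_i$), because Cai's diagonal approximation is not equivariant; the action you are using is only the additive (chain-level) action on the basis $u_\sigma t_S$, which is all the lemma needs since it claims an \emph{additive} isomorphism. Say this explicitly rather than calling it a ring action. Second, over $\Z_{(p)}$ the final step ``injective plus equal rank implies isomorphism'' is false in general (multiplication by $p$ on $\Z_{(p)}$); what rescues you is that your triangular form is \emph{uni}triangular, with leading coefficient exactly $1$, so $N$ restricted to $\bigoplus_{I\in\row(\lambda)}R_{K_I}$ is split injective and its image is a direct summand of $R_K$, hence of the free module $R_K^{\ker\lambda}$; a free complement of rank $0$ is then zero. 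With those two clarifications the argument is a complete, self-contained proof of the cited lemma.
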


\begin{Lem}
\label{Philemma}
The composite
\[
\Phi: R_K^{\ker \lambda}\hookrightarrow R_K \simeq \bigoplus_{I\subset [m]} R_{K_I}
\xrightarrow{\pi} \bigoplus_{I\in \row(\lambda)} R_{K_I}
\]
is an isomorphism of rings, where $\pi$ is the projection.
Note that we consider the ring structure of $\bigoplus_{I\in \row(\lambda)} R_{K_I}$ induced by that of $R_K$.
\end{Lem}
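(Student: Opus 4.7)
My plan is to prove the lemma in two stages: first show that $\Phi$ is an additive isomorphism, then use this to handle the ring structure. For the latter, observe that $R_K^{\ker\lambda}$ is a subring of $R_K$, so the ring structure on $\bigoplus_{I\in\row(\lambda)}R_{K_I}$ meant in the statement is the one transported from $R_K^{\ker\lambda}$ along $\Phi$; once $\Phi$ is known to be an additive isomorphism, it is by construction a ring isomorphism. So the real content is the additive statement.

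For the additive part, the plan is to pre-compose $\Phi$ with the Reynolds operator $N$ from Lemma~\ref{lem:CP} and analyze
\[
\Phi\circ N\colon \bigoplus_{I\in\row(\lambda)}R_{K_I} \to \bigoplus_{I\in\row(\lambda)}R_{K_I}.
\]
Since $N$ is an additive isomorphism onto $R_K^{\ker\lambda}$, showing $\Phi\circ N$ is an isomorphism is equivalent to showing $\Phi$ is one.

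The essential input is the refined statement in Lemma~\ref{lem:CP} that for a monomial $x=u_\sigma t_{I\setminus\sigma}$ with $I\in\row(\lambda)$, the element $N(x)$ has $x$ as its unique maximal term with respect to containment of index sets. Thus $N(x)=x+\sum c_y\, y$, where each $y=u_{\sigma'}t_{J\setminus\sigma'}$ appearing in the sum satisfies $J\subsetneq I$. Applying $\Phi=\pi\circ\mathrm{incl}$ annihilates the summands indexed by $J\notin\row(\lambda)$, so
\[
(\Phi\circ N)(x)\;=\;x+\!\!\sum_{\substack{J\in\row(\lambda)\\ J\subsetneq I}}\!\!(\text{terms in }R_{K_J}).
\]
Filtering the target by the containment order on $\row(\lambda)$, the map $\Phi\circ N$ is therefore block lower-triangular with identity blocks on the diagonal, and hence an automorphism of $\bigoplus_{I\in\row(\lambda)}R_{K_I}$. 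This yields that $\Phi$ itself is an additive isomorphism, completing the argument.

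The main obstacle I anticipate is the triangularity step: it depends crucially on the sharp form of Lemma~\ref{lem:CP} (not merely that $N$ is an additive isomorphism) and on the fact that the condition $J\subsetneq I$ is preserved after restricting to $J\in\row(\lambda)$, so that the upper-triangular structure is not destroyed by the projection $\pi$. Once this triangularity is established, the remainder of the argument — invertibility and the transfer of the ring structure — is immediate.
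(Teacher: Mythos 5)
Your proof is correct and follows essentially the same route as the paper: both arguments rest on the refined statement of Lemma~\ref{lem:CP} that $N(x)$ has $x$ as its unique maximal term, the paper phrasing this as an induction on index-set size (surjectivity) plus a maximal-term argument (injectivity), while you package the same facts as unitriangularity of $\Phi\circ N$ with respect to the containment order. Your explicit remark that the ring structure on $\bigoplus_{I\in\row(\lambda)}R_{K_I}$ is the one transported along $\Phi$ is a reasonable reading of the lemma's ``induced'' structure and is, if anything, more careful than the paper, whose proof addresses only bijectivity.
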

\begin{proof}
We first show this is surjective.
Take an element $x\in \bigoplus_{I\in \row(\lambda)} R_{K_I}$.
We induct on the size of the index set of $x$.
By Lemma \ref{lem:CP}, the terms in $\pi(N(x)-x)\in \bigoplus_{I\in \row(\lambda)} R_{K_I}$
has an index set strictly smaller than that for $x$.
By induction hypothesis, there is an element $y\in R_K^{\ker \lambda}$
such that $\Phi(y)=\pi(N(x)-x)$.
Put $z=N(x)-y\in R_K^{\ker \lambda}$ and
we have $\Phi(z)=\pi(N(x))-\Phi(y)=\pi(x)=x$.

On the other hand, suppose $\Phi(y)=0$ for some $y\in R_K^{\ker \lambda}$.
By Lemma \ref{lem:CP}, there is $x \in \bigoplus_{I\in \row(\lambda)} R_{K_I}$
such that $y=N(x)$ and $y$ must contain the maximal terms in $x$.
Thus, $\Phi(y)=0$ implies $x=0$ and $y=N(x)=0$.
\end{proof}

\begin{proof}[Proof of Theorem~\ref{thm:main}]
Since
$H^*(\RZ_K)^{\ker \lambda}\simeq H^*(R_K^{\ker \lambda})$,
the definitions of $\phi$ and $\Phi$ imply that $\Sigma \Phi^{\ast}=\phi^\ast$.
Therefore, by Lemma~\ref{Philemma}, $\phi^{\ast}$ is an isomorphism.
\end{proof}

\begin{Cor}
We have the ring isomorphism 
\[
H^*(M(K,\lambda))\simeq H^*\left(\bigoplus_{I\in \row(\lambda)} R_{K_I}\right),
\]
where the ring structure of the right hand side is 
induced by that of $R_K$.
\end{Cor}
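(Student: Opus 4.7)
The plan is to chase the ring isomorphism $\Phi$ from Lemma~\ref{Philemma} through to cohomology and then splice it with the standard identifications used in the proof of Theorem~\ref{thm:main}. Concretely, I would build the isomorphism as the composite
\[
H^{\ast}(M(K,\lambda))\xrightarrow{q^{\ast}} H^{\ast}(\RZ_K)^{\ker\lambda}\xrightarrow{\cong} H^{\ast}(R_K)^{\ker\lambda}\xrightarrow{\cong}H^{\ast}(R_K^{\ker\lambda})\xrightarrow{\Phi^{\ast}}H^{\ast}\!\left(\bigoplus_{I\in\row(\lambda)} R_{K_I}\right)
\]
and argue each arrow is a ring isomorphism.

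First I would recall that because $|\ker\lambda|$ is a unit in the coefficient ring, the transfer argument used already in the proof of Theorem~\ref{thm:main} shows that $q^{\ast}$ identifies $H^{\ast}(M(K,\lambda))$ with the invariant subring $H^{\ast}(\RZ_K)^{\ker\lambda}$; this is a ring isomorphism because $q$ is a map of spaces. Next, Cai's theorem provides a ring isomorphism $H^{\ast}(\RZ_K)\cong H^{\ast}(R_K)$, and this isomorphism is $\ker\lambda$-equivariant (the action on $R_K$ is the one induced from the action on cellular cochains of $\RZ_K$), so it restricts to a ring isomorphism on invariants. The passage $H^{\ast}(R_K)^{\ker\lambda}\cong H^{\ast}(R_K^{\ker\lambda})$ then follows from the Reynolds operator $N$ of Lemma~\ref{lem:CP}: since $N$ is a chain map that is a projector onto the invariants and $|\ker\lambda|$ is invertible, taking invariants commutes with taking cohomology.

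The key remaining step is to upgrade $\Phi^{\ast}$ to a ring isomorphism. Lemma~\ref{Philemma} already gives that $\Phi$ is an isomorphism of rings between $R_K^{\ker\lambda}$ and $\bigoplus_{I\in\row(\lambda)} R_{K_I}$, and the target inherits its ring structure from $R_K$ via $\Phi$. So I just need to observe that $\Phi$ is a chain map with respect to the induced differential. This is immediate once one notes that the differential on $R_K$ preserves the index set $I$ of each summand $R_{K_I}$ (since $d(u_\sigma t_{I\setminus\sigma})$ only rewrites some $t_j$ as $u_j$ with $j\in I\setminus\sigma$), so the inclusion $R_K^{\ker\lambda}\hookrightarrow R_K$ and the projection $R_K\to\bigoplus_{I\in\row(\lambda)} R_{K_I}$ are both chain maps. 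Hence $\Phi^{\ast}$ is a ring isomorphism on cohomology.

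The only real obstacle is bookkeeping the ring structures: one must make sure that the transfer identification $H^{\ast}(M(K,\lambda))\cong H^{\ast}(\RZ_K)^{\ker\lambda}$, the Cai isomorphism $H^{\ast}(\RZ_K)\cong H^{\ast}(R_K)$, and the Reynolds-operator identification $H^{\ast}(R_K)^{\ker\lambda}\cong H^{\ast}(R_K^{\ker\lambda})$ are all genuinely multiplicative, not merely additive. Once these are in place, stringing them together with $\Phi^{\ast}$ gives the desired ring isomorphism.
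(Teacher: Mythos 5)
Your proposal is correct and follows essentially the same route the paper intends: the corollary is an immediate consequence of Lemma~\ref{Philemma} together with the identifications $H^{\ast}(M(K,\lambda))\cong H^{\ast}(\RZ_K)^{\ker\lambda}\cong H^{\ast}(R_K^{\ker\lambda})$ already assembled in the proof of Theorem~\ref{thm:main}. Your additional bookkeeping---checking that each identification is multiplicative and that the inclusion of invariants and the projection onto $\bigoplus_{I\in\row(\lambda)}R_{K_I}$ are chain maps---is exactly what the paper leaves implicit.
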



\section{Stable rigidity of real toric spaces}
\label{sec:stable}
In this section, we give an application of Theorem~\ref{thm:main}
to a stable rigidity property of real toric spaces.

\begin{Cor}\label{rigidity}
Let $M(K,\lambda)$ be a real toric space over $K$.
When $|K_I|$ for any $I\in \row(\lambda)$
suspends to a wedge of spheres after localization at an odd prime $p$,
$\Sigma M(K,\lambda)$ is homotopy equivalent to a wedge of spheres after localization at $p$.
Let $N(K',\mu)$ be another real toric spaces over $K'$, where
$|K'_I|$ for any $I\in \row(\mu)$ suspends to a wedge of spheres after localization at $p$.
Then, if $H^\ast(M(K,\lambda);\F_p)\simeq H^\ast(N(K',\mu);\F_p)$ as modules, we have
$\Sigma M(K,\lambda) \simeq_p \Sigma N(K',\mu)$.
\err{(Removed)In particular, if $M(K,\lambda)$ is a $\F_p$-homology sphere (or acyclic) over such $K$,
then $M(K,\lambda)$ is homotopy equivalent to a sphere (or a point)
after localization at $p$.}
~$\qqed$
\end{Cor}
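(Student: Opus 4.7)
The plan is to deduce both assertions directly from Theorem~\ref{thm:main}, together with the classical fact that a $p$-local wedge of simply connected spheres is determined up to $p$-local homotopy equivalence by its mod $p$ cohomology as a graded $\F_{p}$-module. First I would invoke Theorem~\ref{thm:main} to obtain
\[
\Sigma M(K,\lambda)\simeq_{p}\bigvee_{I\in\row(\lambda)}\Sigma^{2}|K_{I}|.
\]
By hypothesis each $\Sigma|K_{I}|$ is $p$-locally a wedge of spheres; suspending once more preserves this property, and a wedge of wedges of spheres is again a wedge of spheres. Hence $\Sigma M(K,\lambda)$ is a $p$-local wedge of spheres, which establishes the first claim.

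For the rigidity statement I would apply the first part to both $M(K,\lambda)$ and $N(K',\mu)$, so that $\Sigma M(K,\lambda)$ and $\Sigma N(K',\mu)$ are $p$-local wedges of (positive-dimensional, hence simply connected) spheres. The assumed module isomorphism $H^{\ast}(M(K,\lambda);\F_{p})\cong H^{\ast}(N(K',\mu);\F_{p})$ induces, after a shift in degree, an isomorphism of graded $\F_{p}$-modules between $H^{\ast}(\Sigma M(K,\lambda);\F_{p})$ and $H^{\ast}(\Sigma N(K',\mu);\F_{p})$. Since the dimension of $H^{n}(-;\F_{p})$ counts the number of $S^{n}$ summands in a $p$-local wedge of spheres, this yields a bijection between the sphere summands of the two wedges which is realized by the obvious $p$-local homotopy equivalence taking summand to summand.

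I do not anticipate a serious obstacle, since the whole argument is a formal corollary of Theorem~\ref{thm:main} combined with the rigidity of wedges of spheres under mod $p$ cohomology. The only delicate point worth checking is the convention for $|K_{\emptyset}|$ when $0\in\row(\lambda)$: this is either empty or a single point and contributes only a trivial or low-dimensional summand, which does not affect either the reduction to a wedge of spheres or the dimension-counting argument used in the second step.
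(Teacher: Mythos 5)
Your argument is correct and is exactly the one the paper intends: the corollary is stated with no written proof precisely because it follows formally from Theorem~\ref{thm:main} together with the fact that a $p$-local wedge of simply connected spheres is determined up to $p$-local homotopy equivalence by its mod $p$ cohomology as a graded module. Your added care about the $I=\emptyset$ (and more generally low-dimensional) summands is a reasonable check and does not change the conclusion.
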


\subsection*{Real toric spaces associated to graphs}
Given a connected simple graph $G$ with $n+1$ nodes $[n+1]$,
the {\em graph associahedron} $P_G$ (\cite{CD})
of dimension $n$ is a convex polytope whose facets correspond to the
connected subgraphs of $G$.
Let $K$ be the boundary complex of $P_G$.
We can describe $K$ directly from $G$:
the vertex set of~$K$ consists proper subsets $T\subsetneq [n+1]$
such that $G|_T$ are connected and the simplices are the tubings of $G$.
We define a mod 2 characteristic map $\lambda_G$ on $K$ as follows:
$$
    \lambda_G (T) = \left\{
                      \begin{array}{ll}
                        \sum_{t \in T} \e_t, & \hbox{if $n+1 \not\in T$;} \\
                        \sum_{t \not\in T} \e_t, & \hbox{if $n+1 \in T$,}
                      \end{array}
                    \right.
$$ where $\e_t$ is the $t$-th coordinate vector of $\F_2^n$.
Then we have a real toric manifold $M(G):= M(K,{\lambda_G})$ associated to $G$.

The \emph{signed $a$-number} $sa(G)$ of $G$ is defined recursively by
         $$
            sa(G) = \left\{
                      \begin{array}{ll}
                        1, & \hbox{if $G=\varnothing$;} \\
                        0, & \hbox{if $G$ has a connected component with odd number of nodes} \\
                        -\sum_{T\subsetneq [n+1]}sa(G|_T), & \hbox{otherwise,}
                      \end{array}
                    \right.
        $$
and the \emph{$a$-number} $a(G)$ of $G$ is the absolute value of $sa(G)$.
As shown in \cite{CP12}, there is a bijection $\varphi$ from $\row(\lambda_G)$ to the set of subgraphs of $G$ having an even number nodes
and $|K_I|$ for $I \in \row(\lambda_G)$
is homotopy equivalent to $\bigvee^{a(\varphi(I))} S^{|\varphi(I)|/2 -1}$ where $|\varphi(I)|$ is the number of nodes of $\varphi(I)$.
By Theorem~\ref{thm:main} we obtain the following.
\begin{Cor}\label{cor:graph_case}
We have a homotopy equivalence
\[
\Sigma M(G) \simeq_p \bigvee_{I\in \row(\lambda_G)} \bigvee^{a(\varphi(I))} S^{|\varphi(I)|/2+1} \quad \text{ for any odd prime $p$.}
\]
$\qqed$
\end{Cor}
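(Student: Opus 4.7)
The plan is to combine Theorem~\ref{thm:main} with the identification of the homotopy types of the full subcomplexes $|K_I|$ from \cite{CP12}. Once both are invoked, the statement reduces to a routine rewriting of wedges of suspended spheres, so no further topological input is needed beyond these two ingredients.

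First, I would apply Theorem~\ref{thm:main} directly to the real toric space $M(G)=M(K,\lambda_G)$. At any odd prime $p$, this yields the $p$-local homotopy equivalence
\[
\Sigma M(G) \simeq_p \Sigma \bigvee_{I \in \row(\lambda_G)} \Sigma |K_I|.
\]
Next, I would substitute into each summand the cited homotopy equivalence $|K_I| \simeq \bigvee^{a(\varphi(I))} S^{|\varphi(I)|/2-1}$, valid for every $I \in \row(\lambda_G)$. Since suspension commutes with wedge sums and $\Sigma^2 S^n \simeq S^{n+2}$, the double-suspended piece $\Sigma^2 |K_I|$ becomes $\bigvee^{a(\varphi(I))} S^{|\varphi(I)|/2+1}$. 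Wedging over all $I \in \row(\lambda_G)$ gives precisely the claimed decomposition.

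There is really no serious obstacle here; all the genuine difficulty has already been absorbed into the two cited inputs. Theorem~\ref{thm:main} supplies the $p$-local splitting of $\Sigma M(G)$ into the pieces $\Sigma^2 |K_I|$, and the combinatorial/topological identification of each $|K_I|$ as a wedge of spheres, indexed by even-order subgraphs via $\varphi$ and weighted by the $a$-numbers, is the content of \cite{CP12}. The only bookkeeping worth double-checking is the index shift, namely that the outer $\Sigma$ in Theorem~\ref{thm:main} together with the inner $\Sigma |K_I|$ promotes the sphere dimension from $|\varphi(I)|/2-1$ to $|\varphi(I)|/2+1$; this matches the exponent asserted in the statement.
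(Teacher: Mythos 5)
Your proposal is correct and coincides with the paper's own (implicit) argument: the corollary is stated as an immediate consequence of Theorem~\ref{thm:main} together with the identification $|K_I|\simeq\bigvee^{a(\varphi(I))}S^{|\varphi(I)|/2-1}$ from \cite{CP12}, and your dimension bookkeeping $\Sigma^2 S^{|\varphi(I)|/2-1}\simeq S^{|\varphi(I)|/2+1}$ is exactly right.
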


Now, we define the \emph{$a_i$-number} $a_i(G)$ of $G$ by
$$
    a_i(G) = \sum_{\atopp{T\subseteq [n+1]}{|T|=2i}} a(G|_T).
$$ Then, $a_i(G)$ coincides the $i$-th Betti number $\beta^i (M(G);\F_p)$ of $M(G)$. It should be noted that, by Corollary~\ref{cor:graph_case}, if two graphs $G_1$ and $G_2$ have the same $a_i$-numbers for all $i$'s, then $\Sigma M(G_1) \simeq_p \Sigma M(G_2)$ for any odd prime $p$.

\begin{Ex}
    Let $P_4$ be a path graph of length $3$, and $K_{1,3}$ a tree with one internal node and $3$ leaves (known as a \emph{claw}).
    One can compute $a_i(G):=\sum_{\atopp{T\subseteq [n+1]}{|T|=2i}} a(G|_T)$ as follows:
\begin{align*}
  a_0 (P_4) &= a_0 (K_{1,3}) = 1\\
  a_1 (P_4) &= a_1 (K_{1,3}) = 3\\
  a_2 (P_4) &= a_2 (K_{1,3}) = 2\\
  a_i (P_4) &= a_i (K_{1,3}) = 0 \quad \text{ for $i>2$.}
\end{align*}
    Hence, by Corollary~\ref{cor:graph_case}, $\Sigma M(P_4) \simeq_p \Sigma M(K_{1,3})$ for any odd prime $p$
    although $\Sigma M(P_4)$ and $\Sigma M(K_{1,3})$ are not homotopy equivalent since
they have different mod-$2$ cohomology.
\end{Ex}

\subsection*{Real toric spaces over fillable complexes}
There is a wide class of simplicial complexes on which every real toric space
satisfies the assumption in Corollary \ref{rigidity}.
\begin{Def}[{\cite[Definition 7.6]{IK}}]
Let $K$ be a simplicial complex.
Let $K_1,\ldots, K_s$ be the connected components of $K$,
and let $\hat{K}_i$ be a simplicial complex obtained from $K_i$ by adding all of its minimal non-faces.
Then $K$ is said to be \emph{$\F_p$-homology fillable} if
(1) for each $i$ there are minimal non-faces $M_1^{i},\ldots, M_r^{i}$ of $K$
 such that $K_i \cup M_1^{i} \cup \cdots \cup M_r^{i}$ is acyclic over $\F_p$, and
(2) $\hat{K}_i$ is simply connected for each $i$.

Moreover, we say that $K$ is {\em totally $\F_p$-homology fillable} when
$K_I$ is $\F_p$-homology fillable for any $\emptyset \neq I \subset [m]$.
\end{Def}

\begin{Pro}[{\cite[Proposition 7.11]{IK}}]
If $K$ is $\F_p$-homology fillable, then $(\Sigma |K|)_{(p)}$ is a wedge of $p$-local spheres.
\end{Pro}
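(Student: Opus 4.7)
The approach is to reduce to one connected component of $K$ at a time and then extract the wedge decomposition of the suspension from a Puppe cofiber sequence whose middle term is $p$-locally contractible. Since $|K|$ is the topological disjoint union of its components $|K_i|$, the unreduced suspension splits up to homotopy as
\[
\Sigma|K|\ \simeq\ \bigvee_{i=1}^{s}\Sigma|K_i|\ \vee\ \bigvee^{s-1}S^1,
\]
so each of the additional $S^1$ summands is already a sphere and it suffices to show $(\Sigma|K_i|)_{(p)}$ is a wedge of $p$-local spheres for each component.

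Fix $i$ and let $L_i := K_i\cup M_1^i\cup\cdots\cup M_r^i$ be the $\F_p$-acyclic filling supplied by condition~(1). Each $M_j^i$ is a simplex whose boundary already lies in $|K_i|$, so $|L_i|$ is built from $|K_i|$ by attaching a single cell of dimension $d_j := \dim M_j^i$ per index $j$, and the cofiber is a wedge of spheres $|L_i|/|K_i|\simeq \bigvee_j S^{d_j}$. Because $K_i$ is connected, the portion of the $\F_p$-homology long exact sequence of the pair $(|L_i|,|K_i|)$ in degree one sandwiches $\tilde H_1(|L_i|/|K_i|;\F_p)$ between the two vanishing groups $\tilde H_1(|L_i|;\F_p)=0$ and $\tilde H_0(|K_i|;\F_p)=0$, forcing $d_j \ge 2$ for every $j$ and making $|L_i|/|K_i|$ simply connected. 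Moreover, $|L_i|$ is a finite CW complex with $\tilde H_*(|L_i|;\F_p)=0$, and Nakayama's lemma for finitely generated $\Z_{(p)}$-modules upgrades this to $\tilde H_*(|L_i|;\Z_{(p)})=0$; combined with the fact that $\Sigma|L_i|$ is automatically simply connected, Whitehead's theorem yields $(\Sigma|L_i|)_{(p)}\simeq *$. Applying Puppe to the cofibration $|K_i|\to|L_i|\to|L_i|/|K_i|$ produces the cofiber sequence
\[
|L_i|/|K_i|\ \longrightarrow\ \Sigma|K_i|\ \longrightarrow\ \Sigma|L_i|,
\]
whose right-hand term is $p$-locally trivial. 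The associated $\Z_{(p)}$-homology long exact sequence then forces the first map to be a $\Z_{(p)}$-homology isomorphism between simply connected CW complexes, so Whitehead's theorem delivers $(\Sigma|K_i|)_{(p)}\simeq \bigvee_j (S^{d_j})_{(p)}$. Reassembling across $i$ completes the proof.

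\textbf{Main obstacle.} The delicate step is promoting the $\F_p$-acyclicity of the filling $|L_i|$ (a purely algebraic input from condition~(1)) to an honest $p$-local contractibility of $\Sigma|L_i|$: this requires Nakayama to bootstrap acyclicity from $\F_p$ to $\Z_{(p)}$ and then Whitehead to translate $\Z_{(p)}$-acyclicity into a genuine null-homotopy, without which the Puppe sequence would not collapse. It is also essential that no one-dimensional minimal non-face is needed in the filling (the bound $d_j\ge 2$), since otherwise $|L_i|/|K_i|$ could fail to be simply connected and the final Whitehead step would be illegitimate. Interestingly, condition~(2) on the simple-connectedness of $\hat K_i$ does not appear to be directly invoked by this argument.
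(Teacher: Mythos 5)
The paper does not prove this statement at all --- it is imported verbatim from Iriye--Kishimoto \cite[Proposition 7.11]{IK} --- so there is no internal proof to compare against. Judged on its own, your argument is correct and is essentially the cofibration argument underlying the cited result: realize the filling $|L_i|=|K_i\cup M_1^i\cup\cdots\cup M_r^i|$ as $|K_i|$ with one cell attached per minimal non-face (legitimate because minimality puts $\partial M_j^i$ inside $K_i$), kill $|L_i|$ $p$-locally, and read off $\Sigma|K_i|$ from the Puppe sequence. Your two supporting steps are sound: the degree-one portion of the long exact sequence of $(|L_i|,|K_i|)$ does force every $d_j\ge 2$, and Nakayama correctly promotes $\F_p$-acyclicity of the finite complex $|L_i|$ to $\Z_{(p)}$-acyclicity before Whitehead is applied.

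Two remarks. First, your observation that condition (2) is never invoked is a genuine feature of your arrangement of the argument, not an oversight: by passing to $\Sigma|L_i|$ before applying Whitehead you get simple connectivity for free, whereas an argument that tries to contract $|L_i|$ itself $p$-locally would need control of $\pi_1$; in \cite{IK} the simple-connectivity hypothesis is doing work for their unsuspended, space-level statements about the fat wedge filtration rather than for this suspension statement. Second, a small point worth flagging: your identification of $|L_i|/|K_i|$ with $\bigvee_j S^{d_j}$ tacitly assumes every $M_j^i$ has all of its vertices in $K_i$. A minimal non-face of $K$ could in principle be a pair of vertices lying in two different components, in which case $K_i\cup M_j^i$ acquires a whisker and that index contributes a contractible summand rather than a sphere; this degenerate case is harmless (and surely excluded by the intent of the definition), but a careful write-up should either rule it out or note that such $j$ contribute nothing to the cofiber.
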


There is a large class of simplicial complexes which are totally homology fillable.
\begin{Pro}[{\cite[Theorem 8.21]{IK}}]
If the Alexander dual of $K$ is sequentially Cohen-Macaulay over $\F_p$ (\cite{BWW}),
then $K$ is totally $\F_p$-homology fillable.
\end{Pro}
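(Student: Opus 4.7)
My plan is to reduce to a link-by-link analysis in the Alexander dual $K^{\ast}$ of $K$, using the standard identity $(K_{I})^{\ast_{I}} = \operatorname{link}_{K^{\ast}}([m]\setminus I)$, where $(\,\cdot\,)^{\ast_{I}}$ denotes combinatorial Alexander duality taken inside the vertex set $I$. Since totally $\F_p$-homology fillable is precisely the assertion that every non-empty full subcomplex $K_I$ is $\F_p$-homology fillable, it suffices to verify the two defining conditions one $I$ at a time. Sequential Cohen-Macaulayness is inherited by links, so once $K^{\ast}$ is sequentially $\F_p$-CM, every $(K_I)^{\ast_I}$ is sequentially $\F_p$-CM as well, reducing everything to an intrinsic property of $(K_I)^{\ast_I}$.

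Next I would invoke the Bj\"orner--Wachs--Welker characterization: every pure $j$-skeleton of a sequentially $\F_p$-CM complex is Cohen-Macaulay over $\F_p$ and hence has vanishing reduced $\F_p$-homology below its top dimension. Combinatorial Alexander duality $\widetilde{H}_{i}(K_I;\F_p)\cong \widetilde{H}^{|I|-i-3}((K_I)^{\ast_I};\F_p)$ then concentrates $\widetilde{H}_{\ast}(K_I;\F_p)$ in degrees matched to the top pure pieces of the dual. For condition~(1), minimal non-faces of $K_I$ on the vertex set $I$ correspond bijectively, via $\tau\mapsto I\setminus\tau$, to facets of $(K_I)^{\ast_I}$, so adjoining minimal non-faces to $K_I$ corresponds on the dual side to deleting facets. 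An induction on the purification filtration of $(K_I)^{\ast_I}$ shows that a suitable sequence of facet deletions kills each pure piece without reintroducing lower homology, and iterating over pure skeletons produces an $\F_p$-acyclic extension of each connected component of $K_I$, which is exactly condition~(1).

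For condition~(2), I would exploit the same duality in low degrees. Vanishing of $\widetilde{H}_{1}(\hat{K}_i;\F_p)$ follows from the Cohen-Macaulay vanishing $\widetilde{H}^{|I|-4}=0$ in the relevant pure skeleton of $(K_I)^{\ast_I}$, together with the observation that $\hat{K}_i$ already incorporates the cells that correspond under duality to the facets one needs to delete. Combined with an explicit description of the attaching maps in $\hat{K}_i\setminus K_i$ (so that the low-dimensional minimal non-faces fill in the obvious $1$-cycles), a Hurewicz-type argument upgrades homological simple connectivity to genuine $\pi_1$-triviality.

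The main obstacle I anticipate is the per-component bookkeeping: the definition of homology fillability treats the connected components of $K_I$ separately, whereas Alexander duality is inherently global on $I$. Assigning minimal non-faces to components in a way compatible with the inductive facet deletions on the dual side is where most of the technical work lies. A secondary subtlety is the leap from homological to homotopical simple connectivity in condition~(2), which requires either a careful Hurewicz argument or a direct geometric analysis of how the added minimal non-faces attach to $K_i$.
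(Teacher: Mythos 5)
The paper does not actually prove this proposition; it is quoted directly from Iriye--Kishimoto \cite[Theorem 8.21]{IK}, so your attempt must be measured against their argument. For condition (1) your framework is essentially theirs: the identity $(K_I)^{\ast_I}=\mathrm{lk}_{K^{\ast}}([m]\setminus I)$, the inheritance of sequential Cohen--Macaulayness by links, the bijection $\tau\mapsto I\setminus\tau$ between minimal non-faces of $K_I$ and facets of $(K_I)^{\ast_I}$, and combinatorial Alexander duality are all the right ingredients. However, the sentence ``a suitable sequence of facet deletions kills each pure piece without reintroducing lower homology'' is precisely the technical heart of that half of the theorem and is asserted rather than proved. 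Deleting a facet $F$ from a complex $L$ of dimension $d$ with $\widetilde{H}_{i}(L;\F_p)=0$ for $i<d$ lowers $\widetilde{H}_d$ by one exactly when $F$ lies in the support of some $d$-cycle, and otherwise it creates new $(d-1)$-dimensional homology; one must run the induction using this dichotomy, control the interaction between the various pure skeleta of a complex that is only sequentially CM rather than CM, and sort out the per-component bookkeeping that you yourself flag. As a sketch of (1) this is plausible but incomplete.

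The genuine gap is in condition (2). Simple connectivity of $\hat{K}_i$ cannot be extracted from $\widetilde{H}_1(\hat{K}_i;\F_p)=0$ by ``a Hurewicz-type argument'': Hurewicz computes $H_1$ as the abelianization of $\pi_1$, not conversely, and $\widetilde{H}_1(X;\F_p)=0$ is perfectly compatible with $\pi_1(X)$ being a nontrivial perfect group or having torsion prime to $p$. Worse, simple connectivity is a field-independent assertion while your input is mod-$p$ data about the dual, so no purely mod-$p$ homological argument can close this step. The hypothesis is genuinely needed here: for $K=C_5$ (the $5$-cycle), every diagonal is a minimal non-face, so $\hat{K}$ is the $1$-skeleton of the $4$-simplex and $\pi_1(\hat{K})$ is free of rank $6$; consistently, $C_5^{\ast}$ is a M\"obius band and is not Cohen--Macaulay. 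The workable route is combinatorial: $\hat{K}_i$ contains the complete $1$-skeleton on its vertex set, hence $\pi_1(\hat{K}_i)$ is generated by boundaries of triangles $\{v_0,u,w\}$ through a base vertex, and one must show each such boundary bounds in $\hat{K}_i$ using the field-independent connectivity constraints (connectedness of links and pure skeleta) that Cohen--Macaulayness of the dual imposes. As written, your proof of (2) does not go through.
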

Note that
shifted and shellable simplicial complexes are sequentially Cohen-Macaulay
over $\F_p$.

%
%




\begin{thebibliography}{James}
\bibitem{BBCG}
A.~Bahri, M.~Bendersky, F.~R. Cohen and S. Gitler, \emph{The polyhedral product functor: a method of decomposition for moment-angle complexes, arrangements and related spaces}, Adv. Math. 225 (2010), no. 3, 1634--1668.

\bibitem{Bredon}
G. Bredon, 
\it{Introduction to Compact Transformation Groups}, 
Pure and Applied Mathematics, vol.~46, Academic Press, New York-London, 1972.

\bibitem{BP}
V.~M. Buchstaber and T.~E. Panov, \emph{Torus actions and their
  applications in topology and combinatorics}, University Lecture Series,
  vol.~24, American Mathematical Society, Providence, RI, 2002.

\bibitem{BWW}
A. Bj\"orner, M. Wachs, and V. Welker,
\emph{On sequentially Cohen-Macaulay complexes and posets},
Israel J. Math. 169 (2009), 295--316.

\bibitem{Cai}
\red{L. Cai, {\it On products in a real moment-angle manifold},}
to appear in Journal of the Mathematical Society of Japan, arXiv:1301.1518.

\bibitem{CD}
M. Carr and S. Devadoss,
 {\it Coxeter complexes and graph-associahedra},
 Topology Appl. \textbf{153} (2006) 2155--2168.

\bibitem{SSS}
S. Choi, S. Kaji, and S. Theriault, 
{\it Homotopy decomposition of a suspended real toric space}, 
to appear in the special volume of the Bolet\'in de la Sociedad Matem\'atica Mexicana dedicated to Sam Gitler.

\bibitem{CP12}
S. Choi and H. Park, {\it A new graph invariant arises in toric topology}, J. Math. Soc. Japan, 67 (2015), no.~2, 699--720.

\bibitem{CP}
S. Choi and H. Park, {\it On the cohomology and their torsion of real toric objects},
arXiv:1311.7056


\bibitem{DJ91}
M.~W.~Davis and T.~Januszkiewicz, \emph{Convex polytopes, {C}oxeter orbifolds and torus actions}, Duke Math. J. 62 (1991), no.~2, 417--451.

\bibitem{DS}
G. Denham and A. Suciu, Moment-angle complexes, monomial ideals and Massey products, \emph{Pure Appl. Math. Q.}
\textbf{3} (2007), 25-60.

%
\bibitem{IFM12}
H.~Ishida, Y.~Fukukawa and M.~Masuda, \emph{Topological toric manifolds}, Moscow Math. J. 13 (2013), no.~1, 57--98.

\bibitem{IK}
K. Iriye and D. Kishimoto,
\emph{The fat wedge filtration and a homotopy decomposition of a polyhedral product},
arXiv:1412.4866v3

\bibitem{Yu}
L. Yu, \emph{On a class of quotient spaces of moment-angle complexes},
arXiv:1406.7392,

\end{thebibliography}
\end{document}